\newtheorem{thm}{Theorem}
\newtheorem{lem}[thm]{Lemma}
\newtheorem{cor}[thm]{Corollary}
\theoremstyle{definition}
\newtheorem{defn}[thm]{Definition}
\newcommand{\defbold}{\textbf}
\newcommand{\inv}{^{-1}}
\newcommand{\tdlc}{t.d.l.c.\@\xspace}
\newcommand{\Cau}{\mathrm{CF}}
\newcommand{\id}{\mathrm{id}}
\newcommand{\Comm}{\mathrm{Comm}}
\newcommand{\bQ}{\mathbb{Q}}
\newcommand{\bZ}{\mathbb{Z}}
\newcommand{\mcC}{\mathcal{C}}
\newcommand{\mcF}{\mathcal{F}}
\begin{document}

\title{The profinite completion of a group localised at a subgroup}

\author{Colin D. Reid}

\maketitle

\begin{abstract}Let $G$ be a group and let $K$ be a commensurated subgroup of $G$.  Then there is a totally disconnected, locally compact (\tdlc) group $\hat{G}_K$ that contains the profinite completion of $K$ as an open compact subgroup and also contains $G$ (modulo the finite residual of $K$) as a dense subgroup.  Moreover, given an arbitrary group $G$, then every \tdlc group containing an image of $G$ as a dense subgroup can be realised as a quotient of $\hat{G}_K$ for some commensurated subgroup $K$.\end{abstract}

The profinite completion is a natural way of embedding a residually finite group in a profinite group.  Some properties of the original group can usefully be analysed through its profinite completion, since profinite groups are well-behaved with respect to finite images, and as compact groups, they are amenable to analytic methods.  (See \cite{Seg} for some examples of profinite methods in group theory.)  This approach can be generalised to other classes of group that may not themselves have interesting finite images, but nevertheless have a residually finite subgroup that is commensurated under conjugation, for instance $\mathrm{SL}_n(\bZ)$ as a subgroup of $\mathrm{SL}_n(\bQ)$.  In this setting, we can obtain a relative or localised profinite completion that is a totally disconnected, locally compact (\tdlc) group $G$, such that the image of the commensurated subgroup is dense in an open compact subgroup of $G$.  This approach has been taken, for instance, in \cite{Sch} and \cite{SW}.

The purpose of this note is to call attention to what might be considered the universal construction of this kind, in a sense that we will make precise.  There are no surprises in either  the statement or the proof of Theorem \ref{mainthm}, but it does not appear to be widely known in this form.  This is in sharp contrast to the special case of the profinite completion, which is by now very much a textbook construction: for instance it appears in the first chapter of \cite{Wil}, and pro-$\mcC$ completions for a class of finite groups $\mcC$ feature prominently in \cite{Rib}.  (After writing an earlier version of this note, I was made aware of a paper by V. Belyaev that gives an equivalent construction - see \cite{Bel} Theorem 7.1.  My thanks to Aleksander Iwanow for pointing this out.)

Throughout, the notation $\overline{H}$ denotes the topological closure of $H$ (the ambient topological space will be clear from the context).

\begin{thm}\label{mainthm}Let $G$ be a topological group and let $K$ be a closed subgroup of $G$ such that $K \cap gKg\inv$ has finite index in $K$ for all $g \in G$.  Then there is an essentially unique group homomorphism $\theta: G \rightarrow \hat{G}_K$, the \defbold{localised profinite completion of $G$ at $K$}, where $\hat{G}_K$ is a topological group, such that the following properties hold:
\begin{enumerate}[(i)]
\item The image of $\theta$ is dense, the restriction $\theta|_K$ of $\theta$ to $K$ is continuous, and $\overline{\theta(K)}$ is an open profinite subgroup of $\hat{G}_K$.
\item Let $R$ be a topological group and let $\phi: G \rightarrow R$ be a group homomorphism.  Suppose that $\phi|_K$ is continuous and $\overline{\phi(K)}$ is profinite.  Then there is a unique continuous homomorphism $\psi: \hat{G}_K \rightarrow R$ such that $\phi = \psi\theta$.\end{enumerate}\end{thm}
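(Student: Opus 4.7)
The plan is to encode the hypotheses of~(ii) as a group topology on $G$, realise the associated completion concretely inside a symmetric group, and then verify the universal property.

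To set up the topology, let
\[
\mathcal{B} = \Bigl\{\bigcap_{g\in F}(K\cap gUg^{-1}) : F\subseteq G\text{ finite},\, U\leq K\text{ open of finite index}\Bigr\}.
\]
Commensurability makes each $K\cap gUg^{-1}$ a finite-index closed, hence open, subgroup of $K$. A routine rewriting shows $\mathcal{B}$ is closed under finite intersection and under $V\mapsto K\cap gVg^{-1}$, so it is a neighbourhood base at $e$ for a group topology $\tau$ on $G$ in which $K$ is open and admits a base of open finite-index subgroups. The point is that $\tau$ is universal for~(ii): for any $(R,\phi)$ as in~(ii), the preimage under $\phi|_K$ of an open subgroup of the profinite group $\overline{\phi(K)}$ has finite index in $K$ (density forces the index to be finite) and therefore lies in $\mathcal{B}$, so $\phi$ is automatically $\tau$-continuous.

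Next I would realise $\hat{G}_K$ as follows. Let $\Omega = \bigsqcup_{V\in\mathcal{B}} G/V$, with $G$ acting by left translation on each component, and set $\hat{G}_K = \overline{\theta(G)}\subseteq\Sym(\Omega)$ in the permutation topology. Commensurability forces $K$ to act with finite orbits on each $G/V$, so $\overline{\theta(K)}$ is compact and profinite in $\Sym(\Omega)$; it coincides with the clopen stabiliser of the basepoint $K\in G/K\subseteq\Omega$, hence is open in $\hat{G}_K$, and $\theta(G)$ is dense by construction. The main technical step, and the one I expect to be the chief obstacle, is a stabiliser-matching identification: every $V\in\mathcal{B}$ arises as the stabiliser in $\theta(K)$ of some finite subset of $\Omega$ (take $\{gU_g : g\in F\}$ when $V=\bigcap_{g\in F}K\cap gU_gg^{-1}$). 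Together with the computation $\ker\theta = \bigcap\mathcal{B} =: N$ (using that $N$ is normal in $G$), this identifies $\overline{\theta(K)}$ with the profinite completion of $K/N$, establishing~(i).

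For~(ii), $\tau$-continuity of $\phi$ forces $\phi(N)$ to be trivial, so $\phi$ factors through $\theta$ as a map whose restriction to $\theta(K)$ is continuous with target inside the profinite group $\overline{\phi(K)}$. The identification of $\overline{\theta(K)}$ with the profinite completion of $K/N$ produces a unique continuous extension $\tilde\phi:\overline{\theta(K)}\to\overline{\phi(K)}$. Using the decomposition $\hat{G}_K=\theta(G)\overline{\theta(K)}$ (an open subgroup times a dense subgroup) together with $\theta(G)\cap\overline{\theta(K)}=\theta(K)$ (the stabiliser of $K\in G/K$ in $\theta(G)$ is exactly $\theta(K)$), I would define $\psi(\theta(g)x):=\phi(g)\tilde\phi(x)$, check well-definedness, then conclude continuity by translating the continuous $\tilde\phi$ and the homomorphism property by density of $\theta(G)$; uniqueness of $\psi$ is immediate.
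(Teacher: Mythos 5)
Your construction is correct in outline, but it takes a genuinely different route from the paper. The paper builds $\hat{G}_K$ directly as a completion: its elements are equivalence classes of Cauchy filters relative to $K$, the key technical input being the lemma that left Cauchy is equivalent to right Cauchy (which is exactly what the commensuration hypothesis buys), and the group operations, the topology and the map $\psi$ of part~(ii) are all defined filter by filter --- in particular $\psi(f)$ is produced in one stroke as the unique point of $\bigcap\{\overline{\phi(gN)} : gN \in f\}$, a nested intersection of compact cosets inside the profinite group $\overline{\phi(K)}$. You instead follow what is essentially the Schlichting--Belyaev route (the paper itself cites Belyaev's Theorem 7.1 as an equivalent construction): topologise $G$ by declaring finite intersections of $K$-conjugates of finite-index open subgroups of $K$ to be open, act on the disjoint union of the corresponding coset spaces, and take the closure of the image in $\Sym(\Omega)$. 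What your approach buys is that completeness, the group structure, and compactness of $\overline{\theta(K)}$ (from the finiteness of the $K$-orbits on each $G/V$) all come for free from $\Sym(\Omega)$, whereas the paper must verify associativity, inverses and the group-topology axioms for filters by hand; your identification of $\overline{\theta(K)}$ with the clopen stabiliser of the coset $K \in G/K$, via the fact that a dense subgroup meets an open subgroup in a dense subset of it, is the right mechanism. What it costs is part~(ii): the paper's intersection-of-cosets formula gives $\psi$ globally, while you must assemble it from the decomposition $\hat{G}_K = \theta(G)\overline{\theta(K)}$, and multiplicativity of $\psi(\theta(g)x) = \phi(g)\tilde{\phi}(x)$ is not visible from the formula alone --- your plan of proving continuity first and then transporting the homomorphism identity from the dense subset $\theta(G) \times \theta(G)$ is the correct fix, but it is the one step that genuinely needs to be written out rather than asserted. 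Finally, the theorem also claims that $\theta$ itself is essentially unique; this is the standard two-line universal-property argument (as at the end of the paper's proof), but your proposal omits it and it should be included.
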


\begin{defn}Let $G$ be a topological group.  The (topological) \defbold{profinite completion} ${G \rightarrow \hat{G}}$ of $G$ is the limit of the inverse system formed by the quotients $G/N$ as $N$ ranges over the closed normal subgroups of $G$ of finite index.  With this definition, the profinite completion is stable in the sense that 
\[\hat{\hat{G}} \cong \hat{G}\]
for any topological group $G$.  The image of the profinite completion is dense and the kernel is the intersection of all closed subgroups of $G$ of finite index.

Two subgroups $H$ and $K$ of $G$ are \defbold{commensurate} if $H \cap K$ has finite index in both $H$ and $K$.  The \defbold{commensurator} $\Comm_G(K)$ of $K$ in $G$ is the set of elements $g \in G$ such that $gKg\inv$ is commensurate with $K$.  Note that $\Comm_G(K)$ is a subgroup of $G$ (although in general it is not closed even if $K$ is closed: see for instance \cite{Rei}).  Also, notice that if $H$ is a subgroup of $G$ and $g \in G$ is such that $g$ commensurates both $H$ and $K$, then $g$ commensurates $H \cap K$ (since the commensurability class of $H \cap K$ is determined by the commensurability classes of $H$ and $K$).\end{defn}

In the more general case of a topological group $G$ and a closed subgroup $K$ of $G$ that is not necessarily commensurated by $G$, we define the localised profinite completion of $G$ at $K$ to be the localised profinite completion of $H$ at $K$, where $H$ is the commensurator of $K$ in $G$ equipped with the subspace topology, and write $\hat{G}_K := \hat{H}_K$.

Here are some observations on the localised profinite completion, all of which will follow easily from the proof of Theorem \ref{mainthm}.
\begin{cor}Let $G$ be a topological group, let $K$ be a closed subgroup of $G$ and let $\theta$ be the localised profinite completion of $G$ at $K$.  Given a subgroup $H$ of $\Comm_G(K)$, write $\tilde{H} = \overline{\theta(H)}$.
\begin{enumerate}[(i)]
\item Suppose $G$ is discrete.  Then taken collectively, the localised profinite completions of $G$ at its commensurated subgroups account for every homomorphism from $G$ to a \tdlc group.

Specifically, let $\phi: G \rightarrow R$ be such a homomorphism, let $U$ be an open compact subgroup of $R$, and set $K = \phi^{-1}(U)$.    Then $G$ commensurates $K$, since $\phi(G)$ commensurates $U$; moreover, there is a unique continuous homomorphism $\psi: \hat{G}_K \rightarrow R$ such that $\phi = \psi\theta$.
\item Let $H$ be a subgroup of $\Comm_G(K)$ that contains $K$.  Then the map from $H$ to $\tilde{H}$ induced by $\theta$ is the localised profinite completion of $H$ at $K$.
\item The induced map from $K$ to $\tilde{K}$ is the profinite completion of $K$.  In particular $\hat{K}_K = \hat{K}$.
\item The kernel of $\theta$ is the intersection of all closed subgroups of $K$ of finite index.  Indeed $\theta^{-1}(\tilde{N})=N$ for every closed subgroup $N$ of $K$ of finite index.
\item Let $K$ and $L$ be closed subgroups of $G$ such that $K$ is commensurate to $L$.  Then $\hat{G}_K = \hat{G}_L$; indeed the localised profinite completion maps of $G$ at $K$ and at $L$ are identical.
\item The homomorphism $\theta$ is continuous if and only if $K$ is open in $G$.
\item Suppose that $G$ is discrete and $K$ is a commensurated subgroup of $G$; let $R$ be the core of $\tilde{K}$ in $\hat{G}_K$.  Let $G /\!/ K$ and $\tau_{G,K}: G \rightarrow G/\!/K$ be as defined in \cite{SW}.  Then there is a topological quotient homomorphism $\eta: \hat{G}_K \rightarrow G /\!/ K$ with kernel $R$ such that $\tau_{G,K} = \eta\theta$.
\item Suppose that $G$ is a \tdlc group and $K$ is a profinite subgroup of $G$.  Then $\theta(K)$ is closed in $\hat{G}_K$ and $\theta$ is an isomorphism of abstract groups such that $\theta\inv$ is continuous.  In this case we have $\hat{G}_K \cong G_{(K)}$, where $G_{(K)}$ is the localisation of $G$ at $K$ defined in \cite{Rei}.  In particular, if $K$ is open then the localised profinite completion of $G$ at $K$ is an isomorphism of topological groups.\end{enumerate}\end{cor}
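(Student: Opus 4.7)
Every assertion rests on the universal property and the essential uniqueness of $(\hat{G}_K, \theta)$ established in Theorem \ref{mainthm}: any pair $(\Gamma, \vartheta)$ satisfying conditions (i)--(ii) of that theorem (with $G, K$ fixed) is canonically isomorphic to $(\hat{G}_K, \theta)$. The recurrent tactic is to produce, for each concrete tdlc completion to be compared with $\hat{G}_K$, a continuous homomorphism in one direction via the universal property, then identify kernels or invoke uniqueness. With this in mind, parts (i), (iii) and (v) are direct. For (i), $K := \phi^{-1}(U)$ is commensurated by $G$ because any two open compact subgroups of a \tdlc group are commensurate ($U \cap \phi(g)U\phi(g)\inv$ is open in the compact $U$, hence of finite index in both); discreteness of $G$ makes $\phi|_K$ trivially continuous, and $\overline{\phi(K)} \subseteq U$ is profinite, so Theorem \ref{mainthm}(ii) supplies the required $\psi$. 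For (v), $K \cap L$ is closed of finite index in both $K$ and $L$, hence open in each (finite-index closed subgroups of a topological group are open), so the hypothesis ``$\phi|_K$ continuous with $\overline{\phi(K)}$ profinite'' is equivalent to the analogous one for $L$, and the universal properties of $\hat{G}_K$ and $\hat{G}_L$ characterise identical pairs. Part (iii) is the case $H = K$ of part (ii): the universal property of $\hat{K}_K$ then collapses to that of the usual topological profinite completion of $K$.

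\textbf{The key identification $\theta^{-1}(\tilde{N}) = N$ and its consequences.} Part (iv) reduces to the claim $\theta^{-1}(\tilde{K}) = K$: combined with (iii), which gives that $\theta|_K: K \to \tilde{K}$ is the profinite completion, this yields the general statement, because the profinite completion $\vartheta: K \to \hat{K}$ satisfies $\vartheta^{-1}(\overline{\vartheta(N)}) = N$ for every closed finite-index $N \leq K$ (a standard consequence of passing through the normal core $\bigcap_k kNk\inv$). To prove $\theta^{-1}(\tilde{K}) = K$, the plan is to apply Theorem \ref{mainthm}(ii) to an auxiliary tdlc target---either the Schlichting completion $G/\!/K$ appearing in (vii), or the natural map $G \to \Sym(G/K)$ induced by left multiplication on cosets---and to observe that in this concrete model, distinct cosets of $K$ in $G$ go to distinct cosets of the image of $K$; this coset-separation pulls back through the continuous map $\hat{G}_K \to \mathrm{(model)}$. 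Once (iv) is in hand, part (ii) follows: the set $\theta(H)\tilde{K}$ is a union of clopen cosets of $\tilde{K}$ containing the dense subgroup $\theta(H)$, hence equals $\tilde{H}$, and the formula $\psi(\theta(h)\tilde{k}) := \phi(h)\,\overline{\phi|_K}(\tilde{k})$ is well-defined by (iv) applied to $H$, and continuous because $\overline{\phi|_K}$ is continuous and $\tilde{K}$ is open. Part (vi) is then immediate: identity neighbourhoods in $\hat{G}_K$ are the $\tilde{N}$, and $\theta^{-1}(\tilde{N}) = N$ is open in $G$ for every closed finite-index $N \leq K$ exactly when $K$ itself is open.

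\textbf{Identification with known completions.} For (vii), $G/\!/K$ from \cite{SW} is \tdlc with $\tau_{G,K}(K)$ dense in an open compact subgroup whose core in $G/\!/K$ is trivial; Theorem \ref{mainthm}(ii) supplies the unique continuous $\eta: \hat{G}_K \to G/\!/K$ factoring $\tau_{G,K}$. The inclusion $R \subseteq \ker\eta$ follows since $\eta(R)$ lies in the core of $\overline{\tau_{G,K}(K)}$, which is trivial. For the reverse, $\hat{G}_K / R$ is a \tdlc group with dense image of $G$ in which the image of $K$ is open compact with trivial core, so by essential uniqueness of the Schlichting completion it coincides with $G/\!/K$, forcing $\ker\eta = R$. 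For (viii), compactness of $K$ together with continuity of $\theta|_K$ gives $\theta(K) = \tilde{K}$; by (iii) this is a topological isomorphism $K \cong \tilde{K}$; density of $\theta(G)$ together with openness of $\theta(K)$ forces $\theta(G) = \hat{G}_K$ (every element lies in a coset $\theta(g)\tilde{K} \subseteq \theta(G)$); injectivity on $K$ combined with (iv) gives $\ker\theta = 1$; and the inverse of $\theta|_K$ extends by left translation to a continuous inverse of $\theta$. The isomorphism $\hat{G}_K \cong G_{(K)}$ is again by essential uniqueness: the localisation of \cite{Rei} satisfies (i) and (ii) of Theorem \ref{mainthm} by its construction.

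\textbf{Main obstacle.} The crux is the coset-separation $\theta^{-1}(\tilde{K}) = K$ in (iv); everything else either factors through Theorem \ref{mainthm}'s universal property or reduces to (iii)/(iv) by standard profinite-group facts. The worry is that the universal property, being characterised only up to isomorphism, does not obviously constrain the cosets of $\tilde{K}$ in $\hat{G}_K$, so a concrete model (the Schlichting completion, or an explicit inverse limit built from finite-index closed subgroups of $K$ together with their $G$-translates) is needed to pin this down. The proofs of (ii), (vi), (vii) and (viii) then unfold in a largely bookkeeping manner from this identification and the essential uniqueness supplied by Theorem \ref{mainthm}.
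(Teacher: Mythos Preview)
Your approach is correct in spirit but differs substantially from the paper's, and there is a logical ordering issue you should fix.

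\textbf{Comparison with the paper.} The paper does not prove the corollary from the universal property; it says the parts ``follow easily from the \emph{proof} of Theorem~\ref{mainthm}''. In that proof the explicit Cauchy-filter construction already establishes $\theta^{-1}(\overline{\theta(gN)}) = gN$ for every $g \in G$ and $N \in \mcF(K)$, and exhibits an explicit topological isomorphism $\alpha: \overline{\theta(K)} \to \hat{K}$. Thus parts (iii) and (iv) are literally contained in the proof of Theorem~\ref{mainthm}, and what you flag as the ``main obstacle'' evaporates. Your route---extracting everything from the universal property alone, using an auxiliary permutation/Schlichting model only to pin down coset-separation---is a genuine alternative. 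Its virtue is that it is construction-free and would apply to any realisation of the universal object; the paper's route is shorter because the needed facts are already on the page.

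\textbf{The circularity.} As written, you deduce (iii) as the case $H=K$ of (ii), deduce (ii) from (iv), and then deduce the general statement of (iv) from the special case $\theta^{-1}(\tilde{K})=K$ \emph{combined with} (iii). That is a loop. The fix is easy: run your auxiliary-model argument not for $G/K$ but for $G/N$ with $N \in \mcF(K)$ arbitrary. The left-multiplication map $\phi: G \to \Sym(G/N)$ still has $K$ acting with finite orbits (since $K \cap gNg^{-1}$ is closed of finite index in $K$), so $\overline{\phi(K)}$ is profinite and $\phi|_K$ is continuous; the induced $\psi:\hat{G}_K \to \overline{\phi(G)}$ then gives $\theta^{-1}(\tilde{N}) \subseteq \phi^{-1}(\mathrm{Stab}(N)) = N$ directly. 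This yields (iv) in full, from which (iii) follows (density gives $\tilde{K}/\tilde{N} \cong K/N$ for every such $N$, so $\tilde{K} \cong \hat{K}$), and then your argument for (ii) goes through. You should also note, for (ii), that your map $\psi$ is a homomorphism because it is continuous and agrees with a homomorphism on the dense subset $\theta(H)$; you checked well-definedness and continuity but did not say this.
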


\begin{defn}Let $G$ be a topological group and let $K$ be a subgroup of $G$.  Let $\mcF(K)$ be the set of closed subgroups of $K$ of finite index.  A \defbold{filter} of $G$ is a set of subsets of $G$ such that 
\[F_1 \in f, F_1 \subseteq F_2 \subseteq H \Rightarrow F_2 \in f\]
 and
\[F_1,F_2 \in f \Rightarrow F_1 \cap F_2 \in f.\]
Note that every set $\Sigma$ of subsets of $G$ is contained in a unique smallest filter, which we call the filter generated by $\Sigma$.  Let $f$ be a filter of $G$.  Say $f$ is \defbold{proper} if $\emptyset \not\in f$.  Say $f$ \defbold{left Cauchy relative to $K$} if $f$ is proper and for every $N \in \mcF(K)$, $f$ contains some (and hence exactly one) left coset of $N$, and $f$ is generated as a filter by left cosets of elements of $\mcF(K)$.  Two proper filters $f$ and $f'$ are \defbold{left equivalent relative to $K$} if their intersection is left Cauchy relative to $K$.  The definitions of `right Cauchy relative to $K$' and `right equivalent relative to $K$' are analogous, with right cosets in place of left cosets.  We say $f$ is \defbold{Cauchy relative to $K$} if it is both left and right Cauchy relative to $K$, and say $f_1$ and $f_2$ are \defbold{equivalent relative to $K$} if they are both left and right equivalent.  In particular, any principal filter of $G$, that is, the filter $f_g$ consisting of all subsets of $G$ containing $g$ for some fixed $g \in G$, is Cauchy.

Write $\Cau(G,K)$ for the set of Cauchy filters of $G$ relative to $K$, modulo equivalence relative to $K$.  In essence, an equivalence class of Cauchy filters relative to $K$ is just a consistent choice of left and right cosets of elements of $\mcF(K)$.  To avoid over-complicating the notation, we will tacitly take representatives and simply regard elements of $\Cau(G,K)$ as being filters of $G$ when there is no ambiguity in doing so.  For instance, if $f$ is an element of $\Cau(G,K)$, the statement `$f$ contains $gK$' should be understood to mean that $gK$ is an element of some (equivalently, every) representative of $f$.\end{defn}

\begin{lem}\label{leftrightlem}Let $G$ be a group and let $K$ be a subgroup of $G$.  Suppose that $gKg\inv$ is commensurate with $K$ for all $g \in G$.  Let $N$ be a subgroup of $K$ of finite index and let $g \in G$.  Then there is a subgroup $M$ of $N$, such that $M$ is an intersection of finitely many conjugates of $N$, and such that for all $h \in G$, the set $gN \cap Nh$ is a union of left cosets of $M$.\end{lem}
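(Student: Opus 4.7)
The plan is to take $M := \bigcap_{n \in N} n H n^{-1}$, the core in $N$ of $H := N \cap g^{-1} N g$.

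First I would verify that $[N : H] < \infty$: the commensuration hypothesis gives $[K : K \cap g^{-1} K g] < \infty$, and combined with $[K:N] < \infty$ this forces $[N : N \cap g^{-1} N g] < \infty$ by a short chain of index inequalities. Since $H \le N_N(H) \le N$, the normaliser $N_N(H)$ has finite index in $N$ as well, so $H$ has only finitely many $N$-conjugates; hence $M$ is already a \emph{finite} intersection $\bigcap_{i=1}^k n_i H n_i^{-1}$ with $n_i \in N$. Because $n_i N n_i^{-1} = N$, each piece simplifies to $n_i H n_i^{-1} = N \cap (g n_i^{-1})^{-1} N (g n_i^{-1})$, which exhibits $M$ as a finite intersection of conjugates of $N$, all lying in $N$.

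Next I would reduce the main claim to a single inclusion. Fix $h \in G$ and assume $gN \cap Nh \ne \emptyset$ (otherwise there is nothing to prove); pick $x \in gN \cap Nh$ and write $x = g n_0$ with $n_0 \in N$. Then $x \in gN$ gives $xN = gN$, and $x \in Nh$ gives $Nx = Nh$, so
\[
gN \cap Nh \;=\; xN \cap Nx \;=\; x\bigl(N \cap x^{-1} N x\bigr),
\]
a single left coset of $L_x := N \cap x^{-1} N x$. Any subgroup of $L_x$ therefore partitions this coset into its own left cosets, so it suffices to prove $M \le L_x$ -- and since $x$ may be any element of the intersection, this is an assertion about a single, $h$-independent $M$.

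The final inclusion is quick. Clearly $M \le N$. For $M \le x^{-1} N x$: from $M \le H \le g^{-1} N g$ we get $n_0^{-1} M n_0 \le n_0^{-1} g^{-1} N g n_0 = (g n_0)^{-1} N (g n_0) = x^{-1} N x$, and since $M$ is normal in $N$ (being a core in $N$) and $n_0 \in N$, the left-hand side equals $M$. I expect the finite-index step $[N:H] < \infty$ to be the only one that genuinely uses the commensuration hypothesis; the rest is careful bookkeeping, where the main trap is to confuse $g^{-1} N g$ with its $N$-conjugate $(g n_0)^{-1} N (g n_0)$ that actually appears in $L_x$.
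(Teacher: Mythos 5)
Your proof is correct, and it ultimately produces essentially the same subgroup as the paper --- $N$ intersected with the conjugates $x^{-1}Nx$ for $x$ ranging over $gN$ --- but it gets there by a genuinely different route. The paper never identifies $gN \cap Nh$ as a single left coset. Instead it sets $R = N \cap gNg^{-1}$, observes that each $gN \cap Nh$ is a union of right cosets of $R$ contained in $gN$, uses commensuration to count that only finitely many such right cosets exist, concludes that $\{gN \cap Nh \mid h \in G\}$ is a finite set with representatives $h_1,\dots,h_n$, and takes $M = N \cap \bigcap_{i=1}^n h_i^{-1}Nh_i$. Your observation that a nonempty $gN \cap Nh$ equals $xN \cap Nx = x(N \cap x^{-1}Nx)$ for any $x$ in it is the sharper structural point: it reduces the whole lemma to the single inclusion $M \le N \cap x^{-1}Nx$, which your choice of $M$ as the core in $N$ of $N \cap g^{-1}Ng$ delivers at once via normality, with commensuration entering only to make that core a finite intersection of finite-index conjugates. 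The trade-off is roughly a wash: the paper's counting argument is more hands-on and avoids normalisers and cores; yours makes the $h$-independence of $M$ transparent and isolates exactly where the commensuration hypothesis is used. Both constructions exhibit $M$ as a finite intersection of conjugates of $N$, which is the form the next lemma needs in order to conclude that $M$ is a closed finite-index subgroup of $K$.
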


\begin{proof}Let $R = N \cap gNg\inv$.  Then for all $h \in G$ we have $R(gN \cap Nh) = gN \cap Nh$, so $gN \cap Nh$ is a union of right cosets of $R$ in $G$.  Now the right cosets of $R$ in $G$ that are subsets of $gN$ are exactly those of the form $Rtg$ for $t \in gNg\inv$; since $R$ has finite index in $gNg\inv$, only finitely many such right cosets of $R$ exist.  Hence the set $\{gN \cap Nh \mid h \in G\}$ is finite: let $h_1,\dots,h_n \in G$ be such that
\[ \{gN \cap Nh \mid h \in G\} = \{gN \cap Nh_1, \dots, gN \cap Nh_n\}.\]
Now let $M = N \cap \bigcap^n_{i=1} h\inv_iNh_i$.  We see that $(gN \cap Nh)M = gN \cap Nh$ for all $h \in G$, so $gN \cap Nh$ is a union of left cosets of $M$.\end{proof}

\begin{lem}Let $G$ be a topological group and let $K$ be a subgroup of $G$.  Let $f$ be a filter of $G$.  Suppose that $gKg\inv$ is commensurate with $K$ for all $g \in G$.  Then $f$ is left Cauchy relative to $K$ if and only if $f$ is right Cauchy relative to $K$.  Two proper filters are left equivalent relative to $K$ if and only if they are right equivalent.\end{lem}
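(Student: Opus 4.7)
I would first prove the asymmetric statement ``$f$ left Cauchy relative to $K$ implies $f$ right Cauchy relative to $K$'' and then note that the converse follows by applying the same statement to the filter $f\inv := \{A\inv : A \in f\}$ (inversion on $G$ is a bijection of filters carrying left cosets of a subgroup to right cosets, so $f$ is right Cauchy iff $f\inv$ is left Cauchy). The equivalence-of-filters statement is then immediate by applying the Cauchy statement to the intersection $f \cap f'$, which is a proper filter whenever $f$ and $f'$ are.

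So assume $f$ is left Cauchy. I must show (a) that $f$ contains exactly one right coset of each $N \in \mcF(K)$, and (b) that $f$ is generated as a filter by right cosets of elements of $\mcF(K)$. Uniqueness in (a) is automatic from the disjointness of distinct right cosets and properness of $f$. For existence, fix $N \in \mcF(K)$ and some $gN \in f$, and apply Lemma \ref{leftrightlem} to produce $M \in \mcF(K)$ with $M \subseteq N$ such that every $gN \cap Nh$ is a union of left cosets of $M$. The filter $f$ must contain some $g'M$, which is forced into $gN$ because $M \subseteq N$ and $f$ is proper. Taking $h = g'$ in the lemma, $gN \cap Ng'$ contains $g'$, hence is a nonempty union of left cosets of $M$ one of which is $g'M$; so $g'M \subseteq Ng'$, and upward closure gives $Ng' \in f$.

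For (b), given any $A \in f$, pick $gN \in f$ with $gN \subseteq A$ (possible since $f$ is generated by left cosets of elements of $\mcF(K)$). Set $R := N \cap gNg\inv$: the commensurability hypothesis makes $gNg\inv$ commensurate with $N$, so $[N : R] < \infty$, and $R$ is closed in $K$ because $gNg\inv$ is closed in $G$, so $R \in \mcF(K)$. By (a), $f$ contains a right coset $Rh$ of $R$; since $R \subseteq gNg\inv$, which is the left-multiplication stabiliser of $gN$, any right coset $Rh$ that meets $gN$ is in fact contained in $gN$ (if $r_0 h = g n_0$ then for $r \in R$, $rh = (r r_0\inv) g n_0 \in (gNg\inv) g n_0 = gN$). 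So $Rh \subseteq gN \subseteq A$, as required. The main obstacle is precisely this last step: extracting from $f$ a right coset \emph{contained in} (not merely meeting) the arbitrary element $A$; the subgroup $R = N \cap gNg\inv$ is chosen specifically to collapse ``meets'' into ``is contained in'' for left cosets of $N$.
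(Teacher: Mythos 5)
Your proof is correct and follows essentially the same route as the paper: your step (a) is the paper's argument almost verbatim (refine $N$ to $M$ via Lemma \ref{leftrightlem}, locate the left coset $g'M$ of $M$ forced inside $gN$, and observe that it lies inside the right coset $Ng'$), and your reductions of the converse (via inversion) and of the equivalence statement (via $f \cap f'$) are exactly the paper's ``by symmetry'' and intersection steps. Your step (b), checking that $f$ is also \emph{generated} as a filter by the right cosets it contains by means of $R = N \cap gNg\inv$, addresses a clause of the definition of ``right Cauchy'' that the paper's proof passes over in silence, so if anything your write-up is the more complete of the two.
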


\begin{proof}For this proof, the Cauchy property and equivalence are understood to be relative to $K$.  Suppose $f$ is left Cauchy and let $N \in \mcF(K)$.  Then $f$ contains some left coset $gN$ of $N$.  Thus by Lemma \ref{leftrightlem} there is a subgroup $M$ of $N$, such that $M$ is an intersection of finitely many conjugates of $N$, so in particular $M$ is a closed subgroup of $K$ of finite index, and such that for all $h \in G$, the set $gN \cap Nh$ is a union of left cosets of $M$.  Thus the partition $\{hM \mid h \in gN\}$ of $gN$ is a subdivision of the partition $\{gN \cap Nh \mid h \in H\}$.  Since $f$ is left Cauchy, there is some left coset $kM$ of $M$ contained in $f$, and since $\emptyset \not\in f$ we have $kM \subseteq gN$.  Hence $kM \subseteq gN \cap Nh \subseteq Nh$ for some $h \in G$, so $Nh \in f$.  As $N$ was an arbitrary element of $\mcF(K)$, we conclude that $f$ is right Cauchy.  The converse holds by symmetry.

Let $f_1$ and $f_2$ be proper filters of $H$.  Then $f_1 \cap f_2$ is left Cauchy if and only if it is right Cauchy, and so $f_1$ and $f_2$ are left equivalent if and only if they are right equivalent.\end{proof}

\begin{proof}[Proof of Theorem \ref{mainthm}]

We construct $\hat{G}_K$ and $\theta$ as follows:

The underlying set of $\hat{G}_K$ is $\Cau(G,K)$.  The function $\theta: G \rightarrow \hat{G}_K$ is given by setting $\theta(g)$ to be the equivalence class of the principal filter generated by $g$.

Let $f_1, f_2 \in \hat{G}_K$, let $N \in \mcF(K)$.  Suppose $g_2N \in f_2$, and let $M = N \cap g_2Ng\inv_2$.  Then $f_1$ contains a left coset of $M$, say $g_1M$.  Now set $F_N = g_1Mg_2N = g_1g_2N$.  Define $f_1f_2$ to be the filter generated by $\{F_N \mid N \in \mcF(K)\}$.  We see that given $N_1, N_2 \in \mcF(K)$ such that $N_1 \le N_2$, we have $F_{N_1} \subseteq F_{N_2}$, so $f_1f_2$ is a proper filter; it is (left) Cauchy relative to $K$ by construction.  Thus $f_1f_2 \in \hat{G}_K$.

It is clear that the binary operation we have defined on $\hat{G}_K$ extends the operation on $\theta(G)$ induced by multiplication in $G$ and that $f\theta(1) = \theta(1)f = f$ for all $f \in \hat{G}_K$.  (From now on we will write $1$ in place of $\theta(1)$ if there is no danger of ambiguity.)  Given $f \in \hat{G}_K$, we obtain $f\inv$ as the equivalence class of the filter generated by $\{Nh\inv \mid N \in \mcF(K), h \in H, hN \in f\}$; it is easily seen that $f\inv$ is well-defined and that $ff\inv = f\inv f = 1$.  Finally, multiplication is associative: given $f_1,f_2,f_3 \in \hat{G}_K$, the products $(f_1f_2)f_3$ and $f_1(f_2f_3)$ are both characterised as being the element $f$ such that, for all $A_i \in f_i$, all $N \in \mcF(K)$ and all $g \in G$, if $A_1A_2A_3 \subseteq gN$ then $gN \in f$.  So $\hat{G}_K$ is a group and $\theta$ is a homomorphism.

Given $g \in G$ and $N \in \mcF(K)$, define $B(gN) := \{[f] \in \hat{G}_K \mid gN \in f\}$.  Define $B := \{ B(gN) \mid g \in G, N \in \mcF(K)\}$.  We claim that $B$ is a base for a group topology of $\hat{G}_K$.  It suffices to show that the preimage $P$ of $B(gN)$ under the map $(x,y) \mapsto xy\inv$ is a union of basic open sets in the product topology.  Indeed, we have the following:
\[ P = \bigcup_{h \in G}\{(f_1,f_2) \mid gNh \in f_1, Nh \in f_2\},\]
which is a union of basic open sets since $gNh$ and $Nh\inv$ can both be decomposed into left cosets of elements of $\mcF(K)$ using Lemma \ref{leftrightlem}.

Every basic open set contains $\theta(g)$ for some $g \in G$, so $\theta$ has dense image.  Basic open sets are in fact clopen, since $\{B(gN) \mid g \in G\}$ is a partition of $|\hat{G}_K$ for any $g \in G$ and $N \in \mcF(K)$.  Indeed, we see that $B(gN) = \overline{\theta(gN)}$ for any $g \in G$ and $N \in \mcF(K)$, and also that $\theta^{-1}(\overline{\theta(gN)}) = gN$.

Observe that there is a natural map $\alpha$ from $\overline{\theta(K)}$ to $\hat{K}$: given a closed normal subgroup $N$ of $K$ of finite index and $f \in \overline{\theta(K)}$, we set the $G/N$-entry of $\alpha(f)$ to be the unique element $gN$ of $G/N$ such that $gN \in f$.  Since every closed subgroup of $K$ of finite index contains a closed normal subgroup of $K$ of finite index, it can easily be seen that $\alpha$ is an isomorphism of topological groups.  Now $\theta|_K$ is the continuous map $\iota\alpha\inv\pi$, where $\pi: K \rightarrow \hat{K}$ is the profinite completion of $K$ and $\iota$ is the inclusion map of $\overline{\theta(K)}$ into $\hat{G}_K$.

Now we must show that property (ii) holds.  Let $R$ and $\phi$ be as given and let $L = \overline{\phi(K)}$.  Given $f \in \hat{G}_K$, define
\[ \hat{f} = \bigcap \{\overline{\phi(gN)} \mid g \in G, N \in \mcF(K), gN \in f\}.\]
Consider first $\hat{1} =  \bigcap \{ \overline{\phi(N)} \mid N \in \mcF(K)\}$.  By continuity of $\theta|_K$ we see that every closed subgroup $P$ of $L$ of finite index contains $\phi(N)$ for some $N \in \mcF(K)$, namely $N = \phi\inv(P) \cap K$.  Since $L$ is profinite, this ensures that $\hat{1}$ is the trivial group.  For general $f$ we conclude that $|\hat{f}|\le 1$ by the construction of $\hat{f}$ as an intersection of cosets; since $\hat{f}$ is the intersection of compact sets, any finite intersection of which is non-empty, in fact $|\hat{f}| =1$.  We define $\psi: \hat{G}_K \rightarrow R$ by setting $\psi(f)$ to be the unique element of $\hat{f}$; it is clear that $\psi$ is a homomorphism and that $\phi = \psi\theta$.  To see that $\psi$ is continuous, it suffices to consider the base of neighbourhoods of the identity in $R$ consisting of open subgroups $P$ of $L$; in this case $\psi\inv(P)$ is a union of cosets of the open subgroup $B(\phi^{-1}(P) \cap K)$ of $\hat{G}_K$.  The equation $\phi = \psi\theta$ determines the restriction of $\psi$ to $\theta(G)$ and hence determines $\phi$ uniquely as a continuous map, since $\theta(G)$ is dense in $\hat{G}_K$.

A standard universal property argument shows that $\theta$ is essentially unique.  Suppose that $\phi: G \rightarrow R$ is another localised profinite completion.  Then $\phi|_K$ is continuous and $\overline{\phi(K)}$ is profinite by property (i), so by property (ii) there is a continuous homomorphism $\psi: \hat{G}_K \rightarrow R$ such that $\phi = \psi\theta$.  Similarly there is a continuous homomorphism $\upsilon: R \rightarrow \hat{G}_K$ such that $\theta = \upsilon\phi = \upsilon\psi\theta$.  By property (ii), in fact $\upsilon\psi$ is the unique continuous endomorphism $\eta$ of $\hat{G}_K$ such that $\theta = \eta\theta$, so $\upsilon\psi = \id_{\hat{G}_K}$.  Similarly $\psi\upsilon = \id_R$, so $\upsilon$ and $\psi$ are mutually inverse isomorphisms of topological groups.\end{proof}

\end{document}